\begin{document}

\newtheorem{definition}{Definiton}[section]
\newtheorem{theorem}{Theorem}[section]
\newtheorem{lemma}{Lemma}[section]
\newtheorem{corollary}{Corollary}[section]

\title{Some Order-Theoretic Properties of the Zeros of the Zeta Function}
\author[1]{Boian Lazov\thanks{boian\_lazov@yahoo.com}}
\affil[1]{Department of Mathematics, University of Architecture, Civil Engineering and Geodesy, 1164 Sofia, Bulgaria}

\maketitle

\begin{abstract}
The (partially) ordered set of the non-trivial zeros of the zeta function with positive imaginary parts is considered. The order is the coordinatewise order inherited from $\mathbb{C}$. Some interesting properties regarding the minimal elements of this poset are proven.
\end{abstract}

\section{Introduction}
\
\indent To the best knowledge of the author the set of zeros of the zeta function has not been considered from an order-theoretic perspective. It seems such a viewpoint could prove useful, however, and this paper aims to draw the attention towards the goal of understanding the structure of said set using order theory.

\section{Definitions}\label{defs}
\
\indent We will consider the poset $(\mathbb{C},\le)$, where the order relation $\le$ is the coordinatewise order, defined by: $a+ib\le c+id$ if and only if $a\le c$ and $b\le d$. Let $Z$ denote the non-trivial roots of $\zeta(s)$ with non-negative imaginary parts, i. e.
\begin{align}
Z=\left\{ s\in\mathbb{C}:0\le s,\zeta(s)=0 \right\}.
\end{align}

\noindent Then we can consider $(Z,\le)$ with the inherited order relation from $(\mathbb{C},\le)$. Since the zeros of a meromorphic function are isolated and there are upper bounds on the number of zeros in a region of the critical strip (see e. g. \cite{ivic1984, borwein2008}) we can, for convenience, index the distinct imaginary parts of the elements of $Z$ and they will form a strictly increasing sequence $\left\{t_i\right\}_{i\in\mathbb{N}}$. If $t_1$ is the first member of this sequence (and thus the smallest one), the root $\sigma_1+it_1\in Z$ is known to lie on the critical line, i. e. $\sigma_1=\frac{1}{2}$, and is trivially a minimal element of $Z$.
\begin{definition}
Let
\begin{align}
Z_n:=\left\{ s\in Z:\Im(s)=t_n \right\}.\label{zndef}
\end{align}
Define the diameter of $Z_n$ by
\begin{align}
d(Z_n)=\underset{s_1,s_2\in Z_n}\max( \Re(s_1)-\Re(s_2) ).
\end{align}
\end{definition}

\indent Every $(Z_n,\le)$ is a totally ordered set. It has a least and a greatest element which we will denote by $\hat{\sigma}_n+i\hat{t}_n$ and $\tilde{\sigma}_n+i\tilde{t}_n$, respectively.

\section{Results}
\
\indent First we will state the following lemma:
\begin{lemma}
The Riemann hypothesis is true if and only if $(Z,\le)$ is a totally ordered set.
\end{lemma}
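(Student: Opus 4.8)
The plan is to prove the two implications separately, treating the converse by contraposition. For the forward direction I would assume the Riemann hypothesis and show directly that any two elements of $Z$ are comparable: under RH every $s\in Z$ satisfies $\Re(s)=\tfrac12$, so given $s_1=\tfrac12+it$ and $s_2=\tfrac12+it'$ the real parts coincide and comparability reduces to comparing the imaginary parts $t,t'\in\mathbb{R}$, which are always comparable. Hence $s_1\le s_2$ or $s_2\le s_1$, and $(Z,\le)$ is totally ordered. This direction is essentially immediate.

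For the converse I would argue the contrapositive: if RH fails then $(Z,\le)$ is not totally ordered, which I establish by exhibiting a single incomparable pair. Assume there is a non-trivial zero off the critical line. The symmetries of the zero set coming from the functional equation, namely $s\mapsto\bar{s}$ and $s\mapsto 1-s$ and hence their composition $s\mapsto 1-\bar{s}$, show that whenever $\sigma+it$ is a zero so is $(1-\sigma)+it$. Both of these share the imaginary part $t\ge 0$, so both lie in $Z$, and since $\sigma\neq\tfrac12$ exactly one has real part greater than $\tfrac12$; denote it $\rho=\sigma_0+it_0$ with $\sigma_0>\tfrac12$.

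The key point, and the main obstacle to a naive argument, is that $\rho$ and its mirror image are in fact comparable precisely because they share an imaginary part; a witness to non-total-ordering must therefore be sought at a different height. I would produce such a witness $\rho'=\sigma'+it'$ with $t'>t_0$ and $\sigma'<\sigma_0$. It exists because the non-trivial zeros have imaginary parts tending to infinity (e.g.\ by the Riemann--von Mangoldt counting formula): choose any zero of height $t'>t_0$; if its real part is already below $\sigma_0$ we are done, and otherwise its reflection across the critical line has the same height $t'>t_0$ and real part at most $1-\sigma_0<\tfrac12<\sigma_0$. For this pair $\sigma'<\sigma_0$ while $t'>t_0$, so neither $\rho\le\rho'$ nor $\rho'\le\rho$ holds, i.e.\ $\rho$ and $\rho'$ are incomparable. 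This contradicts total ordering and completes the contrapositive. The only substantive inputs are the functional-equation symmetry of the zeros and their unboundedness in the imaginary direction; the remaining order-theoretic bookkeeping is elementary.
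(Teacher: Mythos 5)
Your proof is correct, and your forward direction coincides with the paper's. For the converse, however, you take a genuinely different route. The paper argues directly from the assumption that $(Z,\le)$ is totally ordered: it anchors the incomparable pair at the first zero $\sigma_1+it_1$, invoking the known (computationally verified) fact that $\sigma_1=\tfrac{1}{2}$, and reflects a hypothetical off-line zero across the critical line so that it has real part $<\tfrac{1}{2}$ and height $>t_1$, contradicting totality. You instead work by contraposition and never use the location of the first zero: your anchor is the off-line zero itself, normalized via the functional-equation symmetry to have real part $\sigma_0>\tfrac{1}{2}$, and your witness is any zero of strictly greater height, reflected if necessary so that its real part drops below $\tfrac{1}{2}<\sigma_0$. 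The price you pay is an appeal to the unboundedness of the ordinates of the zeros (Riemann--von Mangoldt, or merely the infinitude of the zeros), which the paper's argument does not need; what you gain is independence from the empirical fact that the lowest zero lies on the critical line, and your construction also sidesteps a small edge case that the paper handles only tacitly (namely that no zero other than $\tfrac{1}{2}+it_1$ has ordinate exactly $t_1$), since your witness has strictly larger height by construction. Both external inputs are standard, so the two arguments are of comparable depth, and either one establishes the lemma.
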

\begin{proof}
\textbf{1.)} Assume RH. $(Z,\le)$ is a partially ordered set by definition. Take $\sigma_n+it_n,\sigma_m+it_m\in Z$. Then $\sigma_n=\sigma_m=\frac{1}{2}$. Now we have $t_n\le t_m$ or $t_m\le t_n$ since $t_n,t_m\in\mathbb{R}$. Thus $\sigma_n+it_n\le\sigma_m+it_m$ or $\sigma_m+it_m\le\sigma_n+it_n$, i. e. $(Z,\le)$ is a totally ordered set.\\
\indent \textbf{2.)} Assume that $(Z,\le)$ is a totally ordered set. Let $\sigma_1+it_1,\sigma_k+it_k\in Z$ be such that $\sigma_1+it_1$ is the same as in section \ref{defs} and $\sigma_k\ne \frac{1}{2}$.\\
\indent If $\sigma_k<\frac{1}{2}$, then $\sigma_k<\sigma_1$ and $t_k>t_1$. Thus $\sigma_k+it_k||\sigma_1+it_1$, but $(Z,\le)$ is a totally ordered set which is a contradiction.\\
\indent If $\sigma_k>\frac{1}{2}$, then from the symmetry of the zeros about the $\Re(s)=\frac{1}{2}$ line there must exist $\sigma_l+it_l\in Z$ such that $\sigma_l=1-\sigma_k$ and $t_l=t_k$. We now have $\sigma_l<\sigma_1$ and $t_l>t_1$. Thus $\sigma_l+it_l||\sigma_1+it_1$ and again we reach a contradiction. This means that $\sigma_k=\frac{1}{2}$ and the proof is complete.
\end{proof}

\indent Next is a result concerning the number of minimal elements of $(Z,\le)$. Denoting the set of said elements by $\mathrm{Min}(Z,\le)$, we have:
\begin{theorem}\label{minth}
There is a bijection between $\mathrm{Mn}:=\left\{ Z_n:n>1,d(Z_n)>d(Z_i) \ \forall i \ 0<i<n \right\}$ and $\mathrm{Min}(Z,\le)\setminus \left\{ \sigma_1+it_1 \right\}$.
\end{theorem}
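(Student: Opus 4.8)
The plan is to exhibit an explicit bijection $\Phi$ sending each record-diameter level $Z_n\in\mathrm{Mn}$ to the least (leftmost) zero $\hat{\sigma}_n+it_n$ lying on it. The first task is to describe $\mathrm{Min}(Z,\le)$ concretely. I would argue first that every minimal element is the least element of its own level: if $s=\sigma+it_n\in Z_n$ with $\sigma>\hat{\sigma}_n$, then $\hat{\sigma}_n+it_n\in Z$ shares the imaginary part $t_n$ but has strictly smaller real part, so $\hat{\sigma}_n+it_n<s$ and $s$ is not minimal. Thus only the points $\hat{\sigma}_n+it_n$ are candidates. I would then determine when such a candidate is minimal: a strictly smaller zero $\sigma'+it_j$ would require $t_j\le t_n$ and $\sigma'\le\hat{\sigma}_n$; the level $j=n$ contributes nothing, since $\hat{\sigma}_n$ is already least there, so the only possible obstruction comes from some lower level $j<n$ whose leftmost real part satisfies $\hat{\sigma}_j\le\hat{\sigma}_n$. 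Hence $\hat{\sigma}_n+it_n$ is minimal if and only if $\hat{\sigma}_n<\hat{\sigma}_j$ for every $j<n$, i.e. $\hat{\sigma}_n$ is a strict running minimum of the sequence of leftmost real parts.

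The second step converts this real-part condition into the diameter condition. Here I would invoke the reflection symmetry of the non-trivial zeros about the critical line $\Re(s)=\frac{1}{2}$ (already used in Lemma proof): since $\sigma+it_n\in Z_n$ forces $(1-\sigma)+it_n\in Z_n$, the real parts occurring at height $t_n$ are symmetric about $\frac{1}{2}$, so $\hat{\sigma}_n+\tilde{\sigma}_n=1$ and therefore
\[
\hat{\sigma}_n=\frac{1-d(Z_n)}{2}.
\]
Because $\hat{\sigma}_n$ is a strictly decreasing function of $d(Z_n)$, the running-minimum condition $\hat{\sigma}_n<\hat{\sigma}_j$ for all $j<n$ is equivalent to $d(Z_n)>d(Z_j)$ for all $0<j<n$, which is exactly the defining property of $\mathrm{Mn}$. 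In particular the strictness of the diameter record matches the strict inequality required for minimality.

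Finally I would assemble the bijection by setting $\Phi(Z_n)=\hat{\sigma}_n+it_n$. By the two steps, $\Phi$ maps $\mathrm{Mn}$ into $\mathrm{Min}(Z,\le)$, and since each element of $\mathrm{Mn}$ has $n>1$, its image differs from $\sigma_1+it_1$, so $\Phi$ lands in $\mathrm{Min}(Z,\le)\setminus\{\sigma_1+it_1\}$. Injectivity is immediate, as distinct levels carry distinct imaginary parts $t_n$. For surjectivity I would take any minimal element other than $\sigma_1+it_1$; by the first step it equals $\hat{\sigma}_n+it_n$ for some $n$ with the running-minimum property, and since $Z_1=\{\frac{1}{2}+it_1\}$ forces $n>1$, the second step yields $Z_n\in\mathrm{Mn}$ with $\Phi(Z_n)$ the chosen element. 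The main obstacle I anticipate is not any single computation but getting the characterization in the first step exactly right: keeping the strict versus non-strict inequalities consistent across same-level and lower-level comparisons, and making the symmetry argument rigorous enough that the translation ``smallest leftmost real part $\leftrightarrow$ largest diameter'' is a genuine equivalence rather than merely an implication.
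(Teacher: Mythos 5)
Your proposal is correct and follows essentially the same route as the paper's proof: the same map $Z_n\mapsto\hat{\sigma}_n+it_n$, the same symmetry identity $\hat{\sigma}_n=\frac{1}{2}-\frac{d(Z_n)}{2}$, and the same level-by-level comparisons, merely reorganized as an ``iff'' characterization of $\mathrm{Min}(Z,\le)$ before exhibiting the bijection. If anything, you are slightly more careful than the paper in the surjectivity step, where you explicitly rule out $n=1$ by noting $Z_1=\{\frac{1}{2}+it_1\}$.
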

\begin{proof}
Let $f:\mathrm{Mn}\longrightarrow Z$ be such that $f(Z_n)=\hat{\sigma}_n+i\hat{t}_n$.\\
\indent \textbf{1.)} Consider some $Z_k\in \mathrm{Mn}$. Then $f(Z_k)=\hat{\sigma}_k+i\hat{t}_k$. From the symmetry of the zeros about the $\Re(s)=\frac{1}{2}$ line follows that
\begin{align}
\hat{\sigma}_k=\frac{1}{2}-\frac{d(Z_k)}{2}.\label{rpdist}
\end{align}

\noindent Take $\sigma_l+it_l\in Z$, such that $\sigma_l+it_l\le\hat{\sigma}_k+i\hat{t}_k$. Then
\begin{align}
t_l\le \hat{t}_k, \ \ \ \sigma_l\le\hat{\sigma}_k.\label{rpineq}
\end{align}
Also there exists some set $Z_l$ of the form (\ref{zndef}), such that $\sigma_l+it_l\in Z_l$.\\
\indent If $t_l=\hat{t}_k$, then $Z_l=Z_k$. Since $\hat{\sigma}_l+i\hat{t}_l$ is the least element in $Z_l$, $\hat{\sigma}_k=\hat{\sigma}_l\le\sigma_l$. Thus using (\ref{rpineq}) we get $\sigma_l=\hat{\sigma}_k$ and $\sigma_l+it_l=\hat{\sigma}_k+i\hat{t}_k$.\\
\indent If $t_l<\hat{t}_k$, then $d(Z_l)<d(Z_k)$, since $Z_k\in \mathrm{Mn}$. Similarly to (\ref{rpdist}), $\hat{\sigma}_l=\frac{1}{2}-\frac{d(Z_l)}{2}$. Then $\hat{\sigma}_k<\hat{\sigma}_l\le\sigma_l$, which is a contradiction with (\ref{rpineq}).\\
\indent Thus $\hat{\sigma}_k+i\hat{t}_k$ is a minimal element of $Z$, i. e. $f(Z_k)\in \mathrm{Min}(Z,\le)\setminus \left\{ \sigma_1+it_1 \right\}$.\\
\indent \textbf{2.)} Take $Z_k,Z_l\in \mathrm{Mn}$, such that $Z_k\ne Z_l$. Then $\hat{t}_k\ne \hat{t}_l$ and $f(Z_k)\ne f(Z_l)$. Thus $f$ is injective.\\
\indent \textbf{3.)} Consider some $\sigma_k+it_k\in \mathrm{Min}(Z,\le)\setminus \left\{ \sigma_1+it_1 \right\}$. There exists some $Z_k$, such that $\sigma_k+it_k\in Z_k$. Then $\hat{\sigma}_k+i\hat{t}_k\le\sigma_k+it_k$, but since $\sigma_k+it_k$ is a minimal element, we have
\begin{align}
\hat{\sigma}_k+i\hat{t}_k=\sigma_k+it_k.\label{minmap}
\end{align}
Now suppose that for some $l<k$ there exists $Z_l$, such that $d(Z_l)\ge d(Z_k)$. Then $\hat{t}_l<\hat{t}_k$ and from (\ref{rpdist}) $\hat{\sigma}_l\le \hat{\sigma}_k$. Thus
\begin{align}
\hat{\sigma}_l+i\hat{t}_l<\hat{\sigma}_k+i\hat{t}_k=\sigma_k+it_k,
\end{align}

\noindent but $\sigma_k+it_k$ is minimal and we reach a contradiction. It follows that $d(Z_l)<d(Z_k)$, i. e. $Z_k\in \mathrm{Mn}$. From (\ref{minmap}) $f(Z_k)=\sigma_k+it_k$, so $f$ is surjective. This completes the proof.
\end{proof}

\indent A dual result can be stated for the number of maximal elements of $(Z,\le)$:
\begin{theorem}\label{maxth}
There is a bijection between $\mathrm{Mx}:=\left\{ Z_n:d(Z_n)>d(Z_i) \ \forall i \ i>n \right\}$ and $\mathrm{Max}(Z,\le)$.
\end{theorem}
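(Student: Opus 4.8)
The plan is to mirror the proof of Theorem \ref{minth}, replacing throughout each least element $\hat{\sigma}_n+i\hat{t}_n$ of $Z_n$ by the greatest element $\tilde{\sigma}_n+i\tilde{t}_n$, and reversing every inequality and index comparison. Concretely, I would define $g\colon \mathrm{Mx}\longrightarrow Z$ by $g(Z_n)=\tilde{\sigma}_n+i\tilde{t}_n$ and show it is a bijection onto $\mathrm{Max}(Z,\le)$. The one structural input I would record first is the reflected counterpart of (\ref{rpdist}): by the symmetry of the zeros about $\Re(s)=\tfrac12$, the greatest element of $Z_k$ satisfies $\tilde{\sigma}_k=\tfrac12+\tfrac{d(Z_k)}{2}$. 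Thus a larger diameter pushes the rightmost zero of $Z_k$ further to the right, which is precisely what makes the forward-looking record condition in $\mathrm{Mx}$ the correct dual of the backward-looking one in $\mathrm{Mn}$.

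For the first step, that $g$ lands in $\mathrm{Max}(Z,\le)$, I would take $Z_k\in\mathrm{Mx}$ and suppose some $\sigma_l+it_l\in Z$ satisfies $\tilde{\sigma}_k+i\tilde{t}_k\le\sigma_l+it_l$, so that $\tilde{t}_k\le t_l$ and $\tilde{\sigma}_k\le\sigma_l$. If $t_l=\tilde{t}_k$ then $Z_l=Z_k$ and, since $\tilde{\sigma}_k$ is the greatest real part in $Z_k$, we get $\sigma_l\le\tilde{\sigma}_k$, forcing equality and $\sigma_l+it_l=\tilde{\sigma}_k+i\tilde{t}_k$. If $t_l>\tilde{t}_k$, then $Z_l$ has index larger than that of $Z_k$, so the membership condition of $\mathrm{Mx}$ gives $d(Z_l)<d(Z_k)$, whence $\sigma_l\le\tilde{\sigma}_l=\tfrac12+\tfrac{d(Z_l)}{2}<\tfrac12+\tfrac{d(Z_k)}{2}=\tilde{\sigma}_k$, contradicting $\tilde{\sigma}_k\le\sigma_l$. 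Hence $g(Z_k)$ is maximal. Injectivity is immediate, since distinct sets $Z_k\ne Z_l$ have distinct imaginary parts $\tilde{t}_k\ne\tilde{t}_l$.

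Surjectivity is the part I would treat most carefully, as it is where the index direction must be matched to the diameter condition. Given $\sigma_k+it_k\in\mathrm{Max}(Z,\le)$, I would locate the $Z_k$ containing it; since every element of $Z_k$ is $\le\tilde{\sigma}_k+i\tilde{t}_k$ and $\sigma_k+it_k$ is maximal, it must equal $\tilde{\sigma}_k+i\tilde{t}_k$. To see $Z_k\in\mathrm{Mx}$, I would suppose toward a contradiction that some $Z_l$ with index $l>k$ has $d(Z_l)\ge d(Z_k)$; then $\tilde{t}_l>\tilde{t}_k$ and the reflected identity gives $\tilde{\sigma}_l\ge\tilde{\sigma}_k$, so
\begin{align}
\sigma_k+it_k=\tilde{\sigma}_k+i\tilde{t}_k<\tilde{\sigma}_l+i\tilde{t}_l\in Z,
\end{align}
contradicting maximality. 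Therefore $d(Z_l)<d(Z_k)$ for all $l>k$, i.e. $Z_k\in\mathrm{Mx}$ and $g(Z_k)=\sigma_k+it_k$.

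The main thing to get right is bookkeeping rather than a deep obstacle: one must consistently flip the order relation, replace $\hat{\sigma}_n$, $\hat{t}_n$ by $\tilde{\sigma}_n$, $\tilde{t}_n$ and the sign in (\ref{rpdist}), and trade the condition ``$0<i<n$'' for ``$i>n$'', all while using that a larger index corresponds to a larger imaginary part. It is worth noting one genuine asymmetry with Theorem \ref{minth}: here no element needs to be excluded from $\mathrm{Max}(Z,\le)$. The reason is that $\sigma_1+it_1$, which is trivially minimal, is not maximal, since it lies strictly below the greatest element of any later $Z_n$, whose real part is at least $\tfrac12$; and correspondingly $Z_1$ fails the defining inequality of $\mathrm{Mx}$, because its diameter $d(Z_1)=0$ cannot strictly exceed the nonnegative diameters of later $Z_i$. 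Hence the statement correctly pairs all of $\mathrm{Mx}$ with all of $\mathrm{Max}(Z,\le)$.
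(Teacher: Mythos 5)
Your proposal is correct and is exactly the argument the paper intends: the paper only sketches this proof, defining $g(Z_n)=\tilde{\sigma}_n+i\tilde{t}_n$ and invoking the reflected identity $\tilde{\sigma}_n=\tfrac{1}{2}+\tfrac{d(Z_n)}{2}$ before declaring the rest ``analogous'' to Theorem \ref{minth}, and your write-up carries out that dualization faithfully. Your closing observation about why no element needs to be excluded from $\mathrm{Max}(Z,\le)$ (unlike the exclusion of $\sigma_1+it_1$ in Theorem \ref{minth}) is a worthwhile detail the paper leaves implicit.
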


\indent The bijection here is given by $g:\mathrm{Mx}\longrightarrow Z$, such that $g(Z_n)=\tilde{\sigma}_n+i\tilde{t}_n$. After that the proof is analogous to the previous one, considering that
\begin{align}
\tilde{\sigma}_n=\frac{1}{2}+\frac{d(Z_n)}{2}.
\end{align}
\begin{corollary}\label{mincol}
RH is not true if and only if $(Z,\le)$ has at least $2$ minimal elements.
\end{corollary}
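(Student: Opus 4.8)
The plan is to reduce the statement to the nonemptiness of the set $\mathrm{Mn}$ from Theorem \ref{minth}. Recall that $\sigma_1+it_1$ is always a minimal element of $(Z,\le)$, so $(Z,\le)$ has at least $2$ minimal elements if and only if $\mathrm{Min}(Z,\le)\setminus\left\{\sigma_1+it_1\right\}\ne\varnothing$. By the bijection established in Theorem \ref{minth}, this holds exactly when $\mathrm{Mn}\ne\varnothing$. Hence it suffices to prove that RH fails if and only if $\mathrm{Mn}\ne\varnothing$.

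First I would record the elementary fact that $d(Z_1)=0$: the zero of smallest positive imaginary part lies on the critical line and is the unique element of $Z_1$. For the direction $\mathrm{Mn}\ne\varnothing\Rightarrow$ RH false, I would take any $Z_n\in\mathrm{Mn}$; since $n>1$ forces $d(Z_n)>d(Z_1)=0$, the set $Z_n$ contains a zero with real part strictly less than $\tfrac12$, so RH cannot hold. For the converse, assume RH is false. Then there is a nontrivial zero $\sigma+it$ with $\sigma\ne\tfrac12$ and $t>0$, and by the symmetry of the zeros about the line $\Re(s)=\tfrac12$ the point $(1-\sigma)+it$ is also a zero; hence the set $Z_m$ at height $t_m=t$ satisfies $d(Z_m)>0$. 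Let $n$ be the least index with $d(Z_n)>0$. Since $d(Z_1)=0$ we have $n>1$, and $d(Z_i)=0<d(Z_n)$ for every $i$ with $0<i<n$, so $Z_n\in\mathrm{Mn}$ and $\mathrm{Mn}\ne\varnothing$.

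An alternative route for one implication uses the Lemma directly: if RH holds then $(Z,\le)$ is totally ordered, and a totally ordered set has at most one minimal element, which gives the contrapositive of ``at least $2$ minimal elements $\Rightarrow$ RH false'' at once. The main obstacle, such as it is, is not deep: it lies in justifying $d(Z_1)=0$ and in the passage from an off-line zero to a height with strictly positive diameter. Both rest on standard facts already invoked in the paper --- the location and simplicity of the first zero and the reflection symmetry of the zero set --- so once these are in place the argument is purely order-theoretic bookkeeping built on Theorem \ref{minth}.
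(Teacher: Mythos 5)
Your proof is correct and takes essentially the same route as the paper's: both directions pass through Theorem \ref{minth}, converting a second minimal element into a set $Z_n\in\mathrm{Mn}$ and a positive diameter $d(Z_n)>0$ into an off-line zero (and conversely via the reflection symmetry). Your write-up is in fact more careful than the paper's, which compresses your least-index step (that $d(Z_1)=0$ and the smallest $n$ with $d(Z_n)>0$ lies in $\mathrm{Mn}$) into a single ``i.e.''
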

\begin{proof}
RH not true implies that there is a set $Z_n$ with $d(Z_n)> 0$, i. e. $Z_n\in \mathrm{Mn}$, and by theorem \ref{minth} $(Z,\le)$ has a minimal element distinct from $\sigma_1+it_1$.\\
\indent Conversely, if $(Z,\le)$ has at least $2$ minimal elements, then there exists a set $Z_n\in\mathrm{Mn}$ with $d(Z_n)>0$, again by theorem \ref{minth}, and RH is false.
\end{proof}

\indent Similarly:
\begin{corollary}\label{maxcol}
RH is not true if $(Z,\le)$ has at least $1$ maximal element.
\end{corollary}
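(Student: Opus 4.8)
The plan is to reduce the statement to Theorem \ref{maxth} together with two elementary facts: every diameter is nonnegative, and the sequence of imaginary parts is infinite. First I would assume that $(Z,\le)$ possesses at least one maximal element, so that $\mathrm{Max}(Z,\le)\neq\emptyset$. By the bijection $g$ of Theorem \ref{maxth}, the nonemptiness of $\mathrm{Max}(Z,\le)$ forces $\mathrm{Mx}\neq\emptyset$, and I may therefore fix some $Z_n\in\mathrm{Mx}$.

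Next I would exploit the defining condition of $\mathrm{Mx}$, namely $d(Z_n)>d(Z_i)$ for all $i>n$. Since the imaginary parts form the strictly increasing sequence $\{t_i\}_{i\in\mathbb{N}}$ introduced in section \ref{defs}, there is always an index $i>n$ (for instance $i=n+1$), and each diameter satisfies $d(Z_i)\ge 0$. Hence
\begin{align}
d(Z_n)>d(Z_{n+1})\ge 0,
\end{align}
so that $d(Z_n)>0$. Finally I would translate this into a statement about the location of a zero: from the formula $\tilde{\sigma}_n=\frac{1}{2}+\frac{d(Z_n)}{2}$ used in Theorem \ref{maxth} (the analogue of (\ref{rpdist})), the inequality $d(Z_n)>0$ yields $\tilde{\sigma}_n>\frac{1}{2}$. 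Thus $Z_n$ contains a zero off the critical line, which contradicts RH and shows RH to be false. This mirrors the forward direction of Corollary \ref{mincol}.

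The computations here are routine; the one point that genuinely requires care is the appeal to the infinitude of $\{t_i\}$, which guarantees an index $i>n$ and thereby upgrades the strict inequality $d(Z_n)>d(Z_i)$ into $d(Z_n)>0$. This same feature is the reason the corollary is stated only as a one-directional implication. Unlike the least zero $\sigma_1+it_1$, which is always minimal and anchors the converse in Corollary \ref{mincol}, nothing forces a maximal element to exist when RH fails: the diameters $d(Z_i)$ could fail to admit any single term strictly dominating all later terms (for example if they possessed a strictly increasing subsequence). I therefore would not attempt to prove the reverse implication, as it does not hold in general.
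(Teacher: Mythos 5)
Your proof is correct and follows essentially the same route as the paper, which simply states that the corollary follows from Theorem \ref{maxth} analogously to Corollary \ref{mincol}: nonemptiness of $\mathrm{Max}(Z,\le)$ gives, via the bijection $g$, some $Z_n\in\mathrm{Mx}$ with $d(Z_n)>0$, hence a zero off the critical line. Your write-up is in fact slightly more careful than the paper's, since you make explicit the step the paper leaves implicit --- that the infinitude of the zeros guarantees an index $i>n$, which is what turns the defining condition of $\mathrm{Mx}$ into the strict inequality $d(Z_n)>0$.
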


\indent This follows from theorem \ref{maxth} analogously to the previous corollary.\\
\indent We should note here that corollary \ref{maxcol} doesn't include an "only if" statement. This is the case, because for example the sequence $\{ d(Z_i) \}_{i\in\mathbb{N}}$ can be monotonically increasing and have a strictly increasing subsequence, in which case $\mathrm{Mx}$ is empty and so is $\mathrm{Max}(Z,\le)$. Then RH is not true, but there are $0$ maximal elements.

\end{document}